\documentclass[12pt]{article}
\usepackage{amsmath,amsthm,amsfonts,latexsym,amscd,amssymb,bm}

\setcounter{page}{1} 

\setlength{\textheight}{21.6cm} 

\setlength{\textwidth}{14cm} 

\setlength{\oddsidemargin}{1cm} 

\setlength{\evensidemargin}{1cm} 

\pagestyle{myheadings} 
 \numberwithin{equation}{section}
\newtheorem{theorem}{Theorem}[section]
\newtheorem{lemma}[theorem]{Lemma}

\newtheorem{proposition}[theorem]{Proposition}

\theoremstyle{definition}

\newtheorem{example}[theorem]{Example}

\theoremstyle{remark}

\newcommand{\n}{\noindent}
\newcommand{\set}[1]{\mathcal{#1}}

\begin{document}
	\title{Isotopic classes of transversals in Dihedral group $ D_{2n} $, $n$ odd.}
	\author{Surendra Kumar Mishra \\
	Department of Mathematics, University of Allahabad\\
	Allahabad (India) 211002 \\
	Mobile Number : 7860130698\\
	 \textbf{Email:} surendramishra557@gmail.com  \and
	 R. P. Shukla \\
	 Department of Mathematics, University of Allahabad\\
	 Allahabad (India) 211002 \\
	  \textbf{Email:} shuklarp@gmail.com}
\date{}	
 \maketitle
 
 \begin{abstract} 
 \n	In this article we determine the number of isotopic classes of transversals of a subgroup of 
order 2 in $ D_{2n}$ ($n$ is a positive  odd integer greater than 1), where isotopism classes are 
formed with respect to the induced right loop structures. We also determine the cyclic index of the 
Affine group Aff$(1,p^{2})$, where $p$ is an odd prime.\end{abstract} 

 \noindent\textbf{\textit{Key words:}} Transversals, Right loop, Isotopy, Cyclic index.\\
 
 \n  \textbf{\textit{2010 Mathematics Subject Classification:}} 20D60, 20N05.\\

  \section{Introduction}\label{s1} 
 Let $ G $  be a finite group and $ H $  be a subgroup of  $ G $. A normalized right transversal(NRT) 
$ T $ of $ H $ in $ G $ is a collection of elements of $G$ obtained by selecting one and only one 
element from each right coset of $H$ in $ G $ and  $ 1 \in T $. Let $T$ be an NRT of $H$ in $G$. 
Then $ T $ has a binary operation $\circ $ induced by the binary operation of $G$,  given by 
$ \{x\circ y\}= Hxy\cap T $ with respect to which $T$ becomes a right loop with identity 1, 
that is a right quasigroup with both sided identity (see \cite[Proposition 4.3.3, p.102]{jdh} 
and \cite{rls}). Conversely, every right loop can be embedded as an NRT in a group with some 
universal property (see \cite[Theorem $3.4$, p.74]{rls}).\\
 
 Let $(L_{1}, \circ_{1}) $ and $(L_{2},\circ_{2}) $ be two groupoids. We say that $L_{1}$ is 
$\mathit{isotopic}$ to  $L_{2}$ if there are bijective maps $f$, $g$ and $h$ from $L_{1}$ to 
$L_{2}$ such that $f(a)\circ_{2}g(b) = h(a\circ_{1}b) $ for all $a, b \in L_1$. This type of 
triple $(f,g,h) $ is known as an $\mathit{isotopism}$ or an  $ \mathit{isotopy} $ from $L_{1}$ 
to $L_{2}$. An isotopy $ (f,f,f) $ from  $L_{1}$ to $L_{2}$ is known as an $\mathit{isomorphism}$. 
We say that $(L_{1}, \circ_{1}) $ and $(L_{1},\circ_{2}) $ are principal isotopic if $ (f,g, I) $ 
is an isotopy between $(L_{1}, \circ_{1}) $ and $ (L_{1},\circ_{2})$, where $I$ is the identity 
map on $L_{1}$ (see\cite[p.248]{brk}). Let $ \mathcal{T}(G, H) $ denote the set of all NRTs to $H$ 
in $G$. Let $T_{1}, T_{2} \in\mathcal{T}(G,H) $. If the induced right loop structures in $T_{1}$ 
and $T_{2}$ are isotopic, then we say that $ T_{1}$ and $ T_{2} $ are isotopic.\\
 
 Let $ (S, \circ) $ be a right loop. For $ a \in S $, define $L_{a}^{\circ}:S \rightarrow S $ 
by $ L_{a}^{\circ}(x) = a\circ x $ and $ R_{a}^{\circ}: S \rightarrow S $ by 
$ R_{a}^{\circ}(x)  =x \circ a $. An element $\alpha \in S $ is said to be a {\textit {left nonsingular}}  
if $ L_{\alpha}^{\circ} $ is a bijection of $ S $. It is observed in \cite[Theorem 1 A, p.249]{brk} 
that if  $ \alpha\in S $ is left nonsingular and if $ \beta\in S, $ then there is a principal 
isotopy $((R_{\beta}^{\circ})^{-1},(L_{\alpha}^{\circ})^{-1}, I) $ between  
$( S, \circ_{\alpha,\beta}) $ and $ (S, \circ)$, where for all $a, b\in S 
$,  $ a\circ_{\alpha,\beta}b = (R_{\beta}^{\circ})^{-1}(a)\circ(L_{\alpha}^{\circ})^{-1}(b) $ 
and  every principal isotope of $(S, \circ)$ is of this form. We denote this isotope of $ S $ 
by   $ S_{\alpha,\beta}  $. It is easy to observed that the identity of  
$(S, \circ_{\alpha, \beta)} $ is $ \alpha \circ \beta $ . It is also observed in  
\cite[Lemma 1 A, p.248]{brk} that if a right loop $ (L_{1},\circ_{1}) $ is isotopic to the 
right loop $ (L_{2}, \circ_{2})$, then $ (L_{2}, \circ_{2}) $ is isomorphic to a principal 
isotope of $(L_{1}, \circ_{1}) $.\\
  
    We denote the set of all isotopism classes of  elements in  $ \mathcal{T}(G,H) $ by  
		$\mathcal{I}tp(G,H) $. In Section $2$, we describe the number of elements in 
		$\mathcal{I}tp(D_{2n},H) $, where $n$ is an odd integer greater than $1$ and $H$ is 
		a  subgroup of order $2$ of the dihedral group $D_{2n}$ of order $2n$. The results proved 
		are  generalizations of results in the Section $4$ of \cite{vk}. In Section $3$, 
		we compute the cyclic index of the one dimensional affine group Aff$(1,p^{2})$ 
		of $\mathbb{Z}_{p^{2}} $, where $p$ is an odd prime .

   	\section{ Isotopic classes of transversals}\label{s2}
   	 
   Let $n\in \mathbb{N}$. Consider $\mathbb{Z}_n =\{0, 1,2,\cdots, n-1 \}$ the ring of integers modulo $n$. 
    Let $A\subseteq\mathbb{Z}_{n}\setminus\{0\} $. Let $a$, $b$ $\in\mathbb{Z}_{n} $ and define an 
		operation $\circ_A$ on  $\mathbb{Z}_{n}$ as
   \begin{equation}\label{s2e1} a\circ_A b = \left\{ 
    \begin{array}{l l}
    	a+b & \quad \mbox{ if $ b\notin A $ } \\
    	b-a & \quad \mbox{ if $ b\in A $ }.\\
    \end{array} \right. 
     \end{equation} \\
           It can be easily verified that $ (\mathbb{Z}_{n}, \circ_A)  $ is a right loop
					(see \cite[Section 4]{vk}). We denote this right loop by $ \mathbb{Z}_{n}^{A} $. 
					We observed that if  $ A=\phi,$ then  $ \mathbb{Z}_{n}^{A} $ is the additive  group $ \mathbb{Z}_{n} $.\\

	 Throughout this section, let $ G = D_{2n}= \langle a, b : a^{2}=b^{n}=1,~ aba=b^{-1} \rangle $ 
	be the dihedral group of order $2n$, where $n$ is an odd integer greater than $1$. Let 
	$ H=\{1, x \} $ be a subgroup of $G$ of order $2$ and $ K=\langle b \rangle $ be the cyclic 
	subgroup of $G$ of order $n$. Let $ \sigma : K\rightarrow H $ be a function with 
	$ \sigma (1)=1 $. Then $ T_{\sigma}= \{\sigma (b^{j})b^{j}:1\leq j \leq n\} \in \mathcal{T} (G, H)$ 
	and all NRTs of $H$ in $G$ are of this form. Let $A=\{j\in \mathbb{Z}_{n}| \sigma(b^{j})=x\}$. 
	Since $A$ is completely determined by $\sigma$, we denote $ T_{\sigma}$ by $ T_{A} $. Clearly, 
	the map  $\sigma(b^{j})b^j \rightarrow j$ from $ T_{\sigma}$ to $ \mathbb{Z}_{n}^{A} $ is an 
	isomorphism of right loops. So we may identify the right loop $ T_{A} $ with the right loop 
	$\mathbb{Z}_{n}^{A} $ by means of the above isomorphism. We observe that $ T_{\phi} = K \cong \mathbb{Z}_{n} $.\\

	  Let $ U_{n} $ be the group of units of  the ring $ \mathbb{Z}_{n}  $. 
		For $\nu\in U_n,~u\in\mathbb{Z}_n,$ let $f_{\nu,u}:\mathbb{Z}_{n}\rightarrow \mathbb{Z}_{n} $ 
		be the affine map defined by $f_{\nu, u}(x) =\nu x + u $. 
		For $ B(\neq \phi)\subseteq \mathbb{Z}_{n}\setminus\{0\} $, 
		define $ \set{\chi}_{_B} = \{f^{-1}_{\lambda, t^{\prime}}(B): t^{\prime}\notin B,~ \lambda\in U_n \}\cup\{(f^{-1}_{\lambda, t^{\prime}}(B))^{\prime}: t^{\prime}\in B,~ \lambda\in U_n\}$, 
		where for any subset $A$ of $\mathbb{Z}_{n} $, $A^{\prime}   = \mathbb{Z}_{n} \setminus A  $. 
		If  $ B=\phi, $ then we define $ \chi_{_B}=\phi.$ Now we have the following theorem which is  
		a generalization of  \cite [Theorem  $4.7$]{vk}. 	  
	  
	 \begin{theorem}\label{s2t1} Let $ L=T_{A} \in \mathcal{T}(G, H) $. Then   $ S\in \mathcal{T}(G, H)  $ 
	is isotopic to $ L $ if and only if $ S= T_{C}$ for some  $ C \in \chi_{_A} $.  \end{theorem}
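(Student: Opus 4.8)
The plan is to reduce the isotopy relation to two explicit computations and then match the outcome with the definition of $\chi_{_A}$. Throughout I identify $T_A$ with $\mathbb{Z}_n^A=(\mathbb{Z}_n,\circ_A)$ as in the preceding discussion and use that $0\notin A$ (because $\sigma(1)=1$). Since isotopy is an equivalence relation on right loops and, by \cite[Lemma 1 A]{brk}, every right loop isotopic to $L=T_A$ is isomorphic to a principal isotope of $L$, the assertion ``$S=T_C$ is isotopic to $T_A$'' is equivalent to ``$T_C\cong (T_A)_{\alpha,\beta}$ for some left nonsingular $\alpha\in\mathbb{Z}_n^A$ and some $\beta\in\mathbb{Z}_n$''. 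So I would (i) determine all principal isotopes $(T_A)_{\alpha,\beta}$ up to isomorphism, (ii) classify the isomorphisms among the loops $\mathbb{Z}_n^B$, and (iii) combine these and compare with $\chi_{_A}$.

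For (i): first work out that $R_\beta^{\circ_A}$ is the translation $x\mapsto x+\beta$ when $\beta\notin A$ and the involution $x\mapsto\beta-x$ when $\beta\in A$, and that $\alpha$ is left nonsingular precisely when $A-\alpha$ and $A^{\prime}+\alpha$ partition $\mathbb{Z}_n$ (equivalently $2\alpha\notin A-A^{\prime}$), in which case $(L_\alpha^{\circ_A})^{-1}(b)=b+\alpha$ for $b\in A-\alpha$ and $b-\alpha$ otherwise; in particular $\alpha=0$ is always left nonsingular. Substituting into $a\circ_{\alpha,\beta}b=(R_\beta^{\circ_A})^{-1}(a)\circ_A(L_\alpha^{\circ_A})^{-1}(b)$ and then conjugating by the translation $x\mapsto x-(\alpha\circ_A\beta)$, which sends the identity $\alpha\circ_A\beta$ of $(T_A)_{\alpha,\beta}$ to $0$, a short calculation should give an isomorphism
\[
(T_A)_{\alpha,\beta}\ \cong\
\begin{cases}
\ T_{A-(2\alpha+\beta)}, & \text{if }\ \beta\notin A,\\[1mm]
\ T_{A^{\prime}-\beta}, & \text{if }\ \beta\in A .
\end{cases}
\]
Here left nonsingularity of $\alpha$ forces $2\alpha+\beta\notin A$ in the first line (so $0$ stays out of the index set), $\beta\in A$ does the same in the second, and $n$ being odd enters only through $2\in U_n$. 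Since $\alpha=0$ alone already realizes every index set $A-t$ ($t\in A^{\prime}$) and every $A^{\prime}-t$ ($t\in A$), the conclusion of (i) is that the isomorphism types of principal isotopes of $T_A$ are exactly those of the loops in $\{\,T_{A-t}:t\in A^{\prime}\,\}\cup\{\,T_{A^{\prime}-t}:t\in A\,\}$.

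For (ii) I would show $\mathbb{Z}_n^C\cong\mathbb{Z}_n^D$ iff $C=\nu D$ for some $\nu\in U_n$. One direction is a direct verification that $x\mapsto\nu x$ is an isomorphism $\mathbb{Z}_n^D\to\mathbb{Z}_n^{\nu D}$. For the other, consider the subgroup $M_D\le\mathrm{Sym}(\mathbb{Z}_n)$ generated by all right translations $R_c^{\circ_D}$. Each generator is a translation ($c\notin D$) or the involution $x\mapsto c-x$ ($c\in D$), so $M_D$ sits inside the dihedral subgroup of $\mathrm{Sym}(\mathbb{Z}_n)$ of order $2n$ generated by the translations and $x\mapsto-x$; and since $R_c^{\circ_D}R_{c^{\prime}}^{\circ_D}$ is translation by $c-c^{\prime}$ for $c,c^{\prime}\in D$, the translations inside $M_D$ are exactly those by elements of the subgroup of $\mathbb{Z}_n$ generated by $D^{\prime}\cup(D-D)$ — which is all of $\mathbb{Z}_n$, for it cannot lie in a proper subgroup $m\mathbb{Z}_n$ ($m\mid n$, $m>1$): then $D^{\prime}\subseteq m\mathbb{Z}_n$, and $m$ is odd $\ge 3$, so $1,2\notin m\mathbb{Z}_n$, whence $1,2\in D$ and $1=2-1\in D-D\subseteq m\mathbb{Z}_n$, absurd. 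Hence $M_D$ contains the regular translation subgroup $T\cong\mathbb{Z}_n$, and $T$ coincides with the set of odd-order elements of $M_D$ (any involution $x\mapsto c-x$ has order $2$), so $T$ is characteristic in $M_D$. An isomorphism $\psi:\mathbb{Z}_n^D\to\mathbb{Z}_n^C$ satisfies $\psi R_c^{\circ_D}\psi^{-1}=R_{\psi(c)}^{\circ_C}$, so conjugation by $\psi$ takes $M_D$ onto $M_C$ and, preserving orders, takes $T$ onto $T$; thus $\psi\in N_{\mathrm{Sym}(\mathbb{Z}_n)}(T)$, the group of affine maps $x\mapsto\nu x+c$ ($\nu\in U_n$, $c\in\mathbb{Z}_n$), and $\psi(0)=0$ forces $\psi(x)=\nu x$. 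Comparing $\circ_D$ and $\circ_C$ then yields $C=\nu D$.

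Finally, for (iii): by (i) and (ii), $T_C$ is isotopic to $T_A$ iff $C=\nu(A-t)$ for some $\nu\in U_n$, $t\in A^{\prime}$, or $C=\nu(A^{\prime}-t)$ for some $\nu\in U_n$, $t\in A$. On the other hand $f_{\lambda,t^{\prime}}^{-1}(A)=\lambda^{-1}(A-t^{\prime})$ and $(f_{\lambda,t^{\prime}}^{-1}(A))^{\prime}=\lambda^{-1}(A^{\prime}-t^{\prime})$, so reindexing $\lambda$ by $\nu=\lambda^{-1}$ shows $\chi_{_A}=\{\nu(A-t):\nu\in U_n,\ t\in A^{\prime}\}\cup\{\nu(A^{\prime}-t):\nu\in U_n,\ t\in A\}$, which is exactly the set just produced; the case $A=\phi$ is covered by the same reasoning (with $\chi_{_\phi}=\{\phi\}$). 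The hardest part is step (ii) — extracting the affine form of an abstract loop isomorphism between the $\mathbb{Z}_n^B$; the multiplication-group argument above handles it uniformly (an alternative is to exploit that the power sequence $x,\ x\circ x,\ (x\circ x)\circ x,\dots$ is $x,2x,3x,\dots$ for $x\notin D$ but $x,0,x,0,\dots$ for $x\in D$, is $\psi$-invariant, and is determined by its value at one unit lying outside $D$). The remaining items — the two case computations in (i) and the bookkeeping in (iii), including checking $0$ never enters the relevant index sets — are routine.
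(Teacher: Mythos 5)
Your proposal is correct, and its skeleton --- reduce isotopy to ``isomorphic to a principal isotope'' via Bruck's lemma, compute the principal isotopes by splitting on $\beta\notin A$ versus $\beta\in A$, and then show the connecting isomorphism must be affine --- is the same as the paper's. The genuine difference lies in how the hard step (affinity of the abstract isomorphism) is established. The paper works directly with $h:\mathbb{Z}_{n}^{C}\to(\mathbb{Z}_{n}^{A})_{\alpha,\beta}$: it conjugates the right translations $R_{v}^{\circ_{C}}$ by $h$, uses the order dichotomy (the maps $\psi_{x}$ have odd order, the maps $\rho_{x}$ have order $2$, and conjugation preserves order) to match each $R_{v}^{\circ_{C}}$ with the correct $R_{h(v)}^{\circ_{\alpha,\beta}}$, and then derives $h(u+v)=h(u)+h(v)-h(0)$ (resp.\ its $\rho$-analogue) and concludes $h=f_{\nu,t}$ by induction. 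You instead first normalize every principal isotope to a standard loop $\mathbb{Z}_{n}^{B}$ by a translation, and then prove the reusable criterion $\mathbb{Z}_{n}^{C}\cong\mathbb{Z}_{n}^{D}\Leftrightarrow C=\nu D$ via the right multiplication group $M_{D}$: your $m\mathbb{Z}_{n}$ argument (where the hypothesis that $n$ is odd enters) shows $M_{D}$ contains the full translation group $T$, which is characteristic as the set of odd-order elements, so any isomorphism lies in $N_{\text{Sym}(\mathbb{Z}_{n})}(T)=\text{Aff}(1,n)$ and fixes $0$. The same order observation underlies both arguments, but your holomorph route replaces the paper's element-by-element induction with a more conceptual, modular lemma, while the paper's computation is more elementary and carries the explicit bookkeeping ($t=\beta\pm\alpha$) that you defer to your step (iii). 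Two small reconciliations: your reading $\chi_{_{\phi}}=\{\phi\}$ is the one under which the statement holds for $A=\phi$ (the paper writes $\chi_{_{\phi}}=\phi$ and instead disposes of that case by uniqueness of the loop transversal); and your index set $A-(2\alpha+\beta)$ agrees with the paper's $A-(\alpha+\beta)$ because left nonsingularity of $\alpha$ forces $A-\alpha=A$ (your condition $2\alpha\notin A-A^{\prime}$ together with $2\in U_{n}$ is equivalent to the invariance the paper imports from its Lemma 4.1 citation).
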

	  
	 \begin{proof} By descriptions in the second paragraph of this section and  third paragraph of 
	the Section $1$, it is enough to prove that for a subset $C$ of $\mathbb{Z}_n$, $\mathbb{Z}_{n}^{C} $ 
	is isomorphic to a principal isotope of $\mathbb{Z}_{n}^{A}$ if and only if $C\in\chi_{_A}.$ \\ 
	 	 	  Assume that $ A=\phi $. Then   $ \mathbb{Z}_{n}^{A}  = \mathbb{Z}_{n} $. Since 
				$ \mathcal{T}(G,H) $ contains exactly one loop transversal (see \cite[ Corollary $4.5$]{vk}) 
				and a right loop isotopic to a loop is itself a loop (see \cite [Corollary $3.2$]{vk}), 
				we are done in this case. Next, assume that $ A \neq \phi $.\\
	          
	       Let  $ \beta\in\mathbb{Z}_{n}\setminus\{0\} $. Consider the  bijective 
				maps $ \psi_{\beta} $ and  				$\rho_{\beta} $ on $\mathbb{Z}_{n}$ defined by 
				$ \psi_{\beta}(x) = x+\beta $ and $\rho_{\beta}(x)  = \beta - x $. Then by (\ref{s2e1}),  
	           \begin{equation} \label{s2e2} R_{\beta}^{\circ_{A}}  = \left\{
	           \begin{array}{l l}
	           \psi_{\beta} & \quad \mbox{if $ \beta \notin A$}\\
	            \rho_{\beta} & \quad \mbox{ if $ \beta \in A $ }.\\
	           \end{array} \right. \end{equation}
	           
	           Let $ \alpha\in \mathbb{Z}_{n}^{A} $ be a left nonsingular element. 
						Consider the principal isotope $(\mathbb{Z}_n^{A})_{\alpha,\beta} $ of 
						$\mathbb{Z}_{n}^{A}$. Let $\circ_{\alpha,\beta}$ denote the binary operation of 
						$(\mathbb{Z}_{n}^{A})_{\alpha, \beta}. $
	           First let  $ \beta\notin A  $. Let  $ u,v \in (\mathbb{Z}_n^{A})_{\alpha,\beta}$. 
						Since $\beta \notin A,$ by (\ref{s2e2}),
	                 $(R_{\beta}^{\circ_{A}})^{-1}(u)= \psi_{\beta}^{-1}(u)= u-\beta$. Further,  
									since $ v\notin A $ if and only if $v+k\alpha\notin A ~  (k\in\mathbb{Z})$ 
									(see \cite[Lemma $4.1$]{vk}), \[ (L_{\alpha}^{\circ_{A}})^{-1}(v) = \left\{
	           \begin{array}{l l}
	           v-\alpha & \quad \mbox{if $ v\notin A$}\\
	           v+\alpha & \quad \mbox{ if $ v\in A$ }.\\
	           \end{array} \right. \]
	           Therefore,
	           
	         $ ~~~~~~~~~~~~~~~~$ $ u\circ_{\alpha,\beta} v= (R_{\beta}^{\circ_{A}})^{-1}(u)\circ_{A}(L_{\alpha}^{\circ_{A}})^{-1}(v) $
				 			
	         \begin{equation*}~~~~~~~~~~~~  = \left\{
	           \begin{array}{l l}
	           (u-\beta)\circ_{A}(v-\alpha)    & \quad \mbox{if $ v\notin A$}\\
	           (u-\beta)\circ_{A}(v+\alpha)	& \quad \mbox{ if $v \in A$}\\
	           \end{array} \right. \end{equation*}
	            \begin{equation}\label{s2e3}  ~~~~~~~~~~~~ = \left\{
	           \begin{array}{l l}
	           (v+u)-(\beta+\alpha)    & \quad \mbox{if $ v\notin A$}\\
	           (v-u)+(\beta+\alpha)	& \quad \mbox{ if $v \in A$}.\\
	           \end{array} \right.  \end{equation}   Let $\nu\in U_{n}$. Then the binary operation  
						$ \circ_{\alpha,\beta} $  and the map $ f_{\nu,(\beta+\alpha)} $ defines a binary operation 
						$ \circ_{f_{\nu,(\beta+\alpha)}} $ on  $ \mathbb{Z}_{n} $ so that $ f_{\nu,(\beta+\alpha)} $ 
						is an isomorphism of right loops from $(\mathbb{Z}_{n},\circ_{f_{\nu,\beta+\alpha}})$ to 
						$ ((\mathbb{Z}_n^{A})_{\alpha,\beta}, \circ_{\alpha,\beta}) $. 
						Thus \begin{equation*}  u\circ_{f_{\nu,(\beta+\alpha)} }v=f^{-1}_{\nu,(\beta+\alpha)}
						\big (f_{\nu,(\beta+\alpha)}(u)\circ_{\alpha,\beta}f_{\nu,(\beta+\alpha)}(v)\big ) \end{equation*}

	           \begin{equation}\label{s2e4} ~~~~~~~~~~~~~~ = \left\{
	           \begin{array}{l l}
	           (v+u)   & \quad \mbox{if $ v\notin f^{-1}_{\nu,(\beta+\alpha)}(A)$}\\
	           (v-u)	& \quad \mbox{ if $v\in f^{-1}_{\nu,(\beta+\alpha)}(A)$ }.\\
	           \end{array} \right. \end{equation}  This implies that the right loop $(\mathbb{Z}_{n}$, $\circ_{f_{\nu,(\beta+\alpha)}} $) is $(\mathbb{Z}_{n}^{C}, \circ_C )$, where $C= f^{-1}_{\nu,(\beta+\alpha)}(A)$$\in\chi_{_A } $. \\
	           
	           Let  $ \beta\in A $. Since $ v\in A $ if and only if $v+k\alpha \in A~ (k\in\mathbb{Z})$ (see \cite[Lemma $4.1$]{vk}),  
					\begin{equation*}
				u\circ_{\alpha,\beta}v= R^{-1}_{\beta}(u)\circ_{A}L_{\alpha}^{-1}(v)
					\end{equation*}	 
						 \[~~~~~~~~~~~~~~~~~~~~~~~~~~~~~~~~~~ = \left\{
	           \begin{array}{l l}
	           (\beta-u)\circ_{A}(v-\alpha)    & \quad \mbox{if $ v\notin A$}\\
	           (\beta-u)\circ_{A}(v+\alpha)	& \quad \mbox{ if $v \in A$ }\\
	           \end{array} \right.\]
						\begin{equation}\label{s2e5} ~~~~~~~~~~~~~~~~~~~~~~~~~~~~~~~~~~~~~~~
	           = \left\{
	           \begin{array}{l l}
	           (v-u)+(\beta-\alpha) & \quad \mbox{if $ v\notin A$}\\
	           (v+u)-(\beta-\alpha)	& \quad \mbox{ if $v \in A$ }.\\
	           \end{array} \right. \end{equation}
	            The  arguments used for the case $\beta\notin A $ imply that  the map  $f_{\nu,(\beta-\alpha)} $ is an isomorphism of right loops from $\mathbb{Z}_{n}^{C} $ to $(\mathbb{Z}_{n}^{A})_{\alpha,\beta} $, where $C= (f^{-1}_{\nu,\beta-\alpha})^{\prime}\in\chi_{_A}$. Thus $\mathbb{Z}_{n}^{C}$ is isotopic to $ \mathbb{Z}_{n}^{A} $ if $ C\in\chi_{_A} $.\\
	             
	            Conversely, let C be a subset of $ \mathbb{Z}_{n}\setminus\{0\}$ such that  
							$ \mathbb{Z}_{n}^{C} $ is isotopic to $ \mathbb{Z}_{n}^{A} $. Let $ (f,g,h) $  
							be an isotopy  from $ \mathbb{Z}_{n}^{C} $ to $ \mathbb{Z}_{n}^{A} $. Then as described in  
							the third paragraph of the Section \ref{s1}, $ (f,g,h)=  ((R_{\beta}^{o_{A}})^{-1}, (L_{\alpha}^{o_{A}} )^{-1}, I )(h,h,h) $, where $ h $  is an isomorphism from  $ \mathbb{Z}_{n}^{C} $  to a principal isotope $ L_{1}=(\mathbb{Z}_{n}^{A})_{\alpha,\beta}$ of $\mathbb{Z}_{n}^{\circ_A}$ for some $\beta\in\mathbb{Z}_{n}$ and some  left nonsingular element $\alpha$ in $ \mathbb{Z}_{n}^{A} $. As we have observed in (\ref{s2e2}) if $\beta\notin A$, then $ R_{\beta}^{\circ_{A}}= \psi_{\beta}  $ and if $\beta\in A $, then $R_{\beta}^{\circ_{A}}= \rho_{\beta} $. 
							
	            Let $v\in \mathbb{Z}_{n}^{C}$.  Since $h$ is an isomorphim from $\mathbb{Z}_{n}^{C} $ to $ L_{1} $,
	            \begin{equation}\label{s2e6}
	              R_{v}^{\circ_{C}}= h^{-1}R^{\circ_{\alpha,\beta}}_{h(v)}h, 
	             \end{equation} where $ \circ_{\alpha,\beta} $ is the binary operation of $(\mathbb{Z}_{n}^{A})_{\alpha,\beta}$.\\ 
	           
	           Assume that  $ \beta \notin A $. By $(\ref{s2e3})$, we have 
	           \begin{equation}\label{s2e7}
	         R_{h(v)}^{\circ_{\alpha,\beta}}=\left\{ \begin{array}{l l}
	         \psi_{_{h(v)-(\beta+ \alpha)}} & \quad \mbox{ if $ h(v)\notin A $ } \\
	         \rho_{_{h(v)+(\beta+\alpha)}} & \quad \mbox{ if $h(v)\in A $ }.\\
	         \end{array} \right.
	           \end{equation} 
	          
	          Since for any $x\in\mathbb{Z}_{n}$, $\rho_{_x}$ is of order $2$, $\psi_{x} $ is of odd order in 
						the symmetric group Sym$(\mathbb{Z}_n)$ of $\mathbb{Z}_n$, and the conjugate elements in a group have 
						same order, therefore from $(\ref{s2e6})$ and $(\ref{s2e7})$,  $h^{-1}\psi_{_{h(v)-(\beta+ \alpha)}}h=\psi_{v} $ if $ h(v) \notin A $ and $h^{-1}\rho_{_{h(v)+(\beta+ \alpha)}}h = \rho_{v} $ if $ h(v)\in A $.\\
	           
	                Assume that $ h(v)\notin A $. Then by $(\ref{s2e7})$ and as remarked above,  
									$ \big (h^{-1}\psi_{_{h(v)-(\beta+\alpha)}}\\h\big )(u) = \psi_{v}(u) $, that is  $ h(u+v) = h(u)+h(v)-(\beta+\alpha)$ for all $u\in\mathbb{Z}_n$.
	                 This implies,  $ h(0) = (\beta+\alpha) $ and  by induction we obtain that for each $u\in\mathbb{Z}_n$, \begin{equation}\label{s2e8}
	                h(u)= (h(1)-h(0))u + h(0),
	                \end{equation}  that is  $ h(u) = \nu u+ t $, where $ t= h(0)=\beta + \alpha $ and  $\nu = (h(1)-h(0)) $. Since $ h $ is a bijection on $\mathbb{Z}_n$, $ \nu \in U_{n} $ and so $h = f_{\nu,t} $.\\
	                
	                 Next, assume that $ h(v)\in A.$ Then by $(\ref{s2e7})$ and as remarked earlier, $ \big (h^{-1}\rho_{_{h(v)+(\beta+\alpha)}}h\big )(u) = \rho_{_v}(u)$ for all $u\in\mathbb{Z}_n$. Thus $ h(v-u)= h(v)-h(u)+ (\beta+\alpha)$ and so  $ h(u+v)= h(v+u)=h(v)-h(-u)+(\beta+\alpha)$ for all $u\in \mathbb{Z}_{n}$. Again we note that $ h(0)=\beta+\alpha $. Thus by induction for each $u\in\mathbb{Z}_n$, \begin{equation}\label{s2e9}  h(u)= (h(1)-h(0))u + h(0). \end{equation} Since $h$ is a bijection on $\mathbb{Z}_{n}$, $ (h(1)-h(0)) \in U_{n}$.\\
	                 
	                Thus in both  cases, $h=f_{\nu,t} $, where  $ t= h(0) =(\beta+\alpha) $ and $ \nu = \big (h(1)- h(0)\big )\in U_{n} $.
	                 
	                Hence we have, 
	               $ u\circ_{C}v = h^{-1}(h(u)\circ_{\alpha,\beta}h(v)) = f^{-1}_{\nu,(\beta+\alpha)}(f_{\nu,(\beta+\alpha)}(u)\circ_{\alpha,\beta}f_{\nu,(\beta+\alpha)}(v))$ for all $u, v \in\mathbb{Z}_{n} $.
	                 Thus as observed in $(\ref{s2e4})$, we have \begin{equation}\label{s2e10}
	                   u\circ_{C}v
	                = \left\{
	                \begin{array}{l l}
	                (v+u)   &  \quad \mbox{ if $ v\notin f^{-1}_{\nu,(\beta+\alpha)}(A)$}\\
	                (v-u)	& \quad \mbox{ if $v\in f^{-1}_{\nu,(\beta+\alpha)}(A)$} \\
	                \end{array} \right. \end{equation}\\ 
	                for all $u, v \in\mathbb{Z}_{n} $.
	                 This implies $ C= f^{-1}_{\nu,(\beta+\alpha)}(A) $.\\
	                 
	                  Next, assume that  $ \beta \in A $.
	                 Let $ u, v\in L_{1} $. Then by $(\ref{s2e5})$,    
	                 \[ u\circ_{\alpha,\beta}v
	           = \left\{
	           \begin{array}{l l}
	           (\beta-u)\circ_{A}(v-\alpha)    & \quad \mbox{if $ v\notin A$}\\
	           (\beta-u)\circ_{A}(v+\alpha)	& \quad \mbox{ if $v \in A$ }\\
	           \end{array} \right.\]
									\begin{equation}\label{s2e11} ~~~~~~~~~~~~
	                 = \left\{
	                 \begin{array}{l l}
	                 (v-u)+(\beta-\alpha) & \quad \mbox{if $ v\notin A$}\\
	                 (v+u)-(\beta-\alpha)	& \quad \mbox{ if $v \in A$ }.\\
	                 \end{array} \right. \end{equation}  Since $ h $ is an isomorphism from  $ \mathbb{Z}_{n}^{C} $ to  $ L_{1}$,  we have 
	                 \begin{equation}\label{s2e12}  R_{v}^{\circ_{C}} =
	                 h^{-1} R_{h(v)}^{\circ_{\alpha,\beta}}h \end{equation} and by $(\ref{s2e5})$,\begin{equation}\label{s2e13}
	                 R_{h(v)}^{\circ_{\alpha,\beta}}=\left\{ \begin{array}{l l}
	                 	\psi_{_{h(v)-(\beta- \alpha)}} & \quad \mbox{ if $ h(v)\in A $ } \\
	                 	\rho_{_{h(v)+(\beta-\alpha)}} & \quad \mbox{ if $h(v)\notin A $ }.\\
	                 \end{array} \right. \end{equation}
	                 
	                  Thus as argued for the case $\beta\notin A$, we have, $h= f_{\nu,t}$, where  
	                  $\nu= \big (h(1)-h(0)\big )\in U_{n},~ t=\beta-\alpha $ and

	                  \begin{equation}\label{s2e14}  u\circ_{C}v = \left\{
	                \begin{array}{l l}
	                	(v+u) & \quad \mbox{ if $ v\in f^{-1}_{\nu,(\beta-\alpha)}(A)$}\\
	                	(v-u)	& \quad \mbox{ if $v \notin f^{-1}_{\nu,(\beta-\alpha)}(A)$ }.\\
	                \end{array} \right. \end{equation}  Thus $ C = \mathbb{Z}_{n}\setminus (f^{-1}_{\nu,(\beta-\alpha)}(A)). $ This completes the proof of the theorem. 
	                \end{proof}
	                
	                Let $ \mathcal{S} $ denote a permutation group on a finite set $ S.$ Let $ \mid S \mid=k $. For $ \tau\in\mathcal{S} $, let $ c_{l} (\sigma ) $ denote the number of $l$-cycles in the disjoint cycle decomposition of  $ \tau. $ Let $ \mathbb{Q}[x_{1},\cdots,x_{k}] $ denote the polynomial ring in indeterminates $ x_{1},\cdots, x_{k} $. Then the cyclic index  $ P_{\mathcal{S}}(x_{1},\cdots, x_{k}) $ of $ \mathcal{S} $ is defined to be \begin{equation*}
	                P_{\mathcal{S}}(x_{1},\cdots,x_{k}) =\frac{1}{\mid \mathcal{S} \mid} \sum_{\tau\in\mathcal{S}} x_{1}^{c_{1}(\tau)} \ldots  x_{k}^{c_{k}(\tau)} 
	                \end{equation*} (see\cite[p.146]{bjn}).\\
	                
	               Now we state the following theorem whose proof  may be imitated from the proof of the Theorem $4.9$ of \cite{vk} replacing \cite[Theorem $4.7$]{vk} by Theorem $2.1$, $\mathbb{Z}_{p} $ by $\mathbb{Z}_{n}$, and Aff$(1,p)$ by Aff$(1,n)$.
	                \begin{theorem}\label{s2t2}  Let $n$ be an odd positive integer greater than 1, $G= D_{2n}$ and H be a subgroup of $G$ of order $2$. Then  \begin{center}
	                		$\mathcal{I}tp(G, H) = \frac{\text{P}_{\text{Aff}(1, n)}(2,~ 2,\cdots ,2)}{2}.$ \end{center}
	                \end{theorem}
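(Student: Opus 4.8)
The plan is to derive the count from Theorem~\ref{s2t1} by a Cauchy--Frobenius (Burnside) argument, in the spirit of the proof of \cite[Theorem~$4.9$]{vk}. First, using the identification $T_A\leftrightarrow A$ of the second paragraph of this section, Theorem~\ref{s2t1} says that $\mathcal{I}tp(G,H)$ is the number of equivalence classes of subsets $A\subseteq\mathbb{Z}_n\setminus\{0\}$ under the relation $A\sim C\iff C\in\chi_{_A}$ (an equivalence relation because isotopy is). The first task is to rewrite $\chi_{_A}$ in terms of the natural action of $\mathrm{Aff}(1,n)$ on $\mathbb{Z}_n$. Since $\{f^{-1}_{\lambda,t'}:\lambda\in U_n,\ t'\in\mathbb{Z}_n\}$ is exactly $\mathrm{Aff}(1,n)$, and since $f^{-1}_{\lambda,t'}(A)$ contains $0$ if and only if $t'\in A$ (because $f_{\lambda,t'}(0)=t'$), the two clauses in the definition of $\chi_{_A}$ merge into
\[
\chi_{_A}=\{\kappa(g(A)):g\in\mathrm{Aff}(1,n)\},
\]
where $\kappa(D)=D$ if $0\notin D$ and $\kappa(D)=D'$ if $0\in D$; here $\chi_{_\phi}=\{\phi\}$ and $T_\phi=K$ is the unique loop transversal forming a singleton class (see \cite[Corollary~$4.5$]{vk}).

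Next I would introduce complementation $c\colon 2^{\mathbb{Z}_n}\to 2^{\mathbb{Z}_n}$, $c(D)=D'$, and form the group $\Gamma=\mathrm{Aff}(1,n)\times\langle c\rangle\cong\mathrm{Aff}(1,n)\times\mathbb{Z}_2$ acting on the power set $2^{\mathbb{Z}_n}$ (the action is well defined, $c$ commuting with every permutation-induced map of subsets). The claim is that $\mathcal{I}tp(G,H)$ equals the number of $\Gamma$-orbits on $2^{\mathbb{Z}_n}$. Every $\Gamma$-orbit meets the family of subsets avoiding $0$ (replace $D$ by $D'$ if necessary); inside such an orbit $O$ the pairs $\{D,D'\}$ are genuine $2$-element sets (since $n$ is odd, $D\ne D'$), $\kappa$ is constant on each pair and injective on the set of pairs, so $\kappa$ maps $O$ exactly $2$-to-$1$ onto a single $\sim$-class, namely $\chi_{_A}$ for any $A\in O$ with $0\notin A$; and distinct $\Gamma$-orbits produce $\sim$-inequivalent subsets. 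Hence $\Gamma$-orbits correspond bijectively to $\sim$-classes.

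Finally, applying the Cauchy--Frobenius lemma to $\Gamma$ (with $|\Gamma|=2n|U_n|$),
\[
\#\{\Gamma\text{-orbits}\}=\frac{1}{2n|U_n|}\Big(\sum_{g\in\mathrm{Aff}(1,n)}\#\{g\text{-invariant subsets}\}+\sum_{g\in\mathrm{Aff}(1,n)}\#\{(g\circ c)\text{-invariant subsets}\}\Big).
\]
A subset is $g$-invariant iff it is a union of cycles of $g$, so the first inner count is $2^{c(g)}$ with $c(g)=\sum_{l}c_l(g)$ the total number of cycles of $g$ on $\mathbb{Z}_n$, and the first sum equals $\sum_{g}2^{c(g)}=n|U_n|\,\text{P}_{\text{Aff}(1,n)}(2,\dots,2)$. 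A subset $D$ is $(g\circ c)$-invariant iff $x\in D\iff g(x)\notin D$, i.e.\ membership in $D$ alternates along each cycle of $g$; this forces every cycle of $g$ to have even length, which is impossible because the cycle lengths sum to the odd number $n$. So the second sum vanishes and $\mathcal{I}tp(G,H)=\tfrac12\,\text{P}_{\text{Aff}(1,n)}(2,\dots,2)$.

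The main obstacle I anticipate is the bookkeeping of the previous paragraph: one must verify that the hybrid recipe defining $\chi_{_A}$ (an affine map, followed by complementation in precisely the cases that would otherwise put $0$ into the image) is reproduced exactly by the $\Gamma$-orbits, so that $\kappa$ has uniform $2$-to-$1$ behaviour and no orbit is miscounted. Once that correspondence is pinned down, the parity observation — every element of $\mathrm{Aff}(1,n)$ permutes the odd-sized set $\mathbb{Z}_n$ and hence has a cycle of odd length — kills all complementation terms and the stated formula drops out.
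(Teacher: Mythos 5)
Your argument is correct, and it is essentially the proof the paper delegates to \cite[Theorem 4.9]{vk}: reinterpret Theorem~\ref{s2t1} as saying that the isotopy classes are the orbits of $\mathrm{Aff}(1,n)\times\mathbb{Z}_2$ (affine maps together with complementation) on subsets of $\mathbb{Z}_n$, and then apply Cauchy--Frobenius, where $\sum_g 2^{c(g)}$ gives the cycle-index specialization and the complementation terms vanish because $n$ is odd forces every affine permutation to have an odd-length cycle. Your handling of the $\kappa$-normalization (complementing exactly when $0$ would land in the image) and of the degenerate class of $T_\phi$ is the right bookkeeping and matches the paper's intent.
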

	                
	                \section{Cyclic index of $ \text{Aff}(1, p^{2}) $}\label{s3}
	                 In this section we determine the cyclic index of  the one dimensional affine group $ \text{Aff}(1, p^{2}) $ of  $\mathbb{Z}_{p^{2}}$ ($p$ is an odd prime).\\
	                 
	                  Let $ J =\langle p \rangle =\{0,p,2p,\cdots,(p-1)p\} $ denote the unique maximal ideal in $\mathbb{Z}_{p^{2}}$.
	                 \begin{lemma}\label{s3l1} Let $ \nu\in U_{p^{2}}\setminus 1+J $  and $x\in\mathbb{Z}_{p^2}$. Then  $\nu x\equiv x (\text{mod}~p^{2}) $ if and only if $ x \equiv 0 (\text{mod}~ p^{2}) $.
	                 \end{lemma}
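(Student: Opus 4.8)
The plan is to reduce the biconditional to a statement about units in $\mathbb{Z}_{p^2}$. The "if" direction is immediate: if $x\equiv 0\pmod{p^2}$ then $\nu x \equiv 0 \equiv x \pmod{p^2}$, regardless of $\nu$. So the substance is the forward implication.

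For the "only if" direction, I would start from $\nu x\equiv x\pmod{p^2}$ and rewrite it as $(\nu-1)x\equiv 0\pmod{p^2}$. The key observation is that $\nu-1$ is a unit in $\mathbb{Z}_{p^2}$. Indeed, the hypothesis $\nu\in U_{p^2}\setminus(1+J)$ means precisely that $\nu-1\notin J=\langle p\rangle$, i.e.\ $p\nmid(\nu-1)$; and since $J$ is the unique maximal ideal of $\mathbb{Z}_{p^2}$, any element of $\mathbb{Z}_{p^2}$ lying outside $J$ is a unit. Hence $\nu-1\in U_{p^2}$, and multiplying $(\nu-1)x\equiv 0\pmod{p^2}$ on the left by $(\nu-1)^{-1}$ gives $x\equiv 0\pmod{p^2}$, as required.

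There is no real obstacle here; the only point that needs to be stated carefully is the characterization of units of $\mathbb{Z}_{p^2}$ as exactly the elements not divisible by $p$ (equivalently, the complement of $J$), which is what converts the set-theoretic condition $\nu\notin 1+J$ into the algebraic fact that $\nu-1$ is invertible. I would phrase the argument in two or three sentences, making that conversion explicit, and then close with the trivial converse.
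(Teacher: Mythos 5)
Your argument is correct and is essentially the paper's own proof: both reduce $\nu x\equiv x$ to $(\nu-1)x\equiv 0\pmod{p^2}$ and then use that $\nu\notin 1+J$ forces $\gcd(\nu-1,p^2)=1$, so $\nu-1$ is invertible and $x\equiv 0$. Your write-up is in fact slightly more explicit than the paper's, which simply asserts $(\nu-1,p^2)=1$ without spelling out the passage from $\nu-1\notin J$ to invertibility.
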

	                 	  \begin{proof} Assume that $ \nu x\equiv x (mod~p^{2}) $. Then \begin{equation}
	                  (\nu-1)x\equiv 0 (mod~p^{2}).      
	                \end{equation} Since $ (\nu-1,~ p^{2} )= 1 $,  $(3.1)$ is satisfied for $x\equiv0$ (mod$~p^{2} $) only. Obviously $x=0$ satisfies $\nu
	                x\equiv x (\text{mod}~p^{2}) $.\end{proof}  
									
            \begin{lemma}\label{s3l2}
            Let $\nu= 1+kp $ and $ t^{\prime} =lp\in J $, where $k(\neq 0),~ l \in \{0,1, 2,\cdots , p-1\}$. Let $x \in \mathbb{Z}_{p^2} $. Then $\nu x + t^{\prime} \equiv x (\text{mod}~p^2) $ if and only if $x\in -k^{\prime}l+J$, where $k^{\prime} k\equiv 1(\text{mod}~p). $
            \begin{proof} Since $p$ is a prime, there exist a unique integer $k^{\prime}(\text{modulo}~ p)$ such that $k^{\prime}k\equiv  1(\text{mod}~ p).$ Now\\  
             $\nu x + t^{\prime} \equiv x (\text{mod}~p^2)  \Leftrightarrow   
            (\nu-1) x + lp  \equiv 0 (\text{mod}~p^2) \Leftrightarrow 
            (kx + l)p  \equiv 0 (\text{mod} ~ p^2) \Leftrightarrow
           kx + l \equiv 0 (\text{mod}~p) \Leftrightarrow x\equiv -k^{\prime}l(\text{mod}~p).$
                        \end{proof} 
            \end{lemma}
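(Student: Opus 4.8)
The plan is to reduce the fixed-point congruence $\nu x + t' \equiv x \pmod{p^2}$ to a linear congruence modulo $p$ and then invert the coefficient of $x$. First I would substitute $\nu = 1+kp$ and $t' = lp$ directly. Expanding gives $x + kpx + lp \equiv x \pmod{p^2}$, and cancelling the common $x$ from both sides collapses the statement to $(kx+l)p \equiv 0 \pmod{p^2}$. This is the only genuine algebraic manipulation, and it already removes the ``$x$ on both sides'' feature that made the original equation look like a nontrivial fixed-point condition.

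Next I would divide through by the factor of $p$. Since $p^2 \mid (kx+l)p$ holds if and only if $p \mid (kx+l)$, the congruence $(kx+l)p \equiv 0 \pmod{p^2}$ is equivalent to $kx + l \equiv 0 \pmod p$. This cancellation is the one place that calls for slight care: we are dropping exactly one power of $p$, so the modulus descends from $p^2$ to $p$, and the final characterization will therefore be a condition modulo $p$ rather than modulo $p^2$. This is precisely why the answer emerges as a full coset of $J$ rather than as a single residue class, and it is the only conceptual point I would flag as a potential pitfall.

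Finally I would solve $kx \equiv -l \pmod p$ for $x$. Because $k \in \{1,\dots,p-1\}$ with $k \neq 0$, we have $\gcd(k,p)=1$, so $k$ is a unit modulo $p$ and there is a unique $k' \pmod p$ with $k'k \equiv 1 \pmod p$. Multiplying by $k'$ yields $x \equiv -k'l \pmod p$. I would then translate this into the coset language of the statement: since $J = \langle p \rangle$ is precisely the set of elements of $\mathbb{Z}_{p^2}$ divisible by $p$, the condition $x \equiv -k'l \pmod p$ says exactly that $x - (-k'l) \in J$, that is $x \in -k'l + J$.

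Since every step in this chain is a reversible equivalence, both directions of the ``if and only if'' are handled simultaneously, and there is no substantive obstacle beyond keeping track of which power of $p$ the modulus carries at each stage.
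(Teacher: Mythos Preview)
Your proposal is correct and follows essentially the same chain of equivalences as the paper: subtract $x$, factor out $p$ to drop the modulus from $p^2$ to $p$, and invert $k$ modulo $p$. The only addition is your explicit translation of $x\equiv -k'l\pmod p$ into the coset form $x\in -k'l+J$, which the paper leaves implicit.
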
 
	                \begin{proposition}\label{s3p1} The cyclic index of the affine group $ P_{\text{Aff}(1, p^{2})} $ is \begin{equation*} \begin{split}
	                	\text{P}_{\text{Aff}(1, p^{2})}(x_{1},\cdots ,x_{p^{2}})=&\frac{1}{p^{2}\varphi(p^{2})}[x_{1}^{p^{2}}+(p-1)x_{p}^{p} + p^{2}\sum_{t|p-1, t\neq 1}\varphi(t)x_{1}x_{t}^{\frac{p^{2}-1}{t}}+\\&
	                	p^{2} \sum_{t|p-1, t\neq 1}\varphi(tp)x_{1} x_{tp}^{\frac{p-1}{t}}x_{t}^{\frac{p-1}{t}}  + p (p-1)x_{1}^{p}x_{p}^{p-1}+ p\varphi(p^{2})x_{p^{2}}], \end{split}     
	                \end{equation*}\end{proposition}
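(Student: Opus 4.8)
\emph{Proof proposal.} The plan is to partition $\text{Aff}(1,p^2)$ according to the multiplier $\nu\in U_{p^2}$, to determine for each $\nu$ the cycle type on $\mathbb{Z}_{p^2}$ of every affine map $f_{\nu,u}$, and then to average $\prod_{\ell} x_\ell^{c_\ell(f_{\nu,u})}$ over all $p^2\varphi(p^2)$ group elements. Three structural facts do the work. First, $U_{p^2}$ is cyclic of order $p(p-1)$ and $1+J$ is its unique subgroup of order $p$, which is exactly the set of units congruent to $1$ modulo $p$; hence $\nu\notin 1+J$ is the same as $d:=\text{ord}_p(\nu)\ge 2$, and in that case $\text{ord}_{p^2}(\nu)\in\{d,dp\}$ with precisely $\varphi(d)$ units of order $d$ and $\varphi(dp)=(p-1)\varphi(d)$ units of order $dp$ lying over an order-$d$ class of $U_p$ (since $\gcd(d,p)=1$, the order-$d$ subgroup of $U_{p^2}$ maps isomorphically onto that of $U_p$). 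Second, by Lemma \ref{s3l1}, when $\nu\notin 1+J$ the map $f_{\nu,u}$ has the unique fixed point $-(\nu-1)^{-1}u$, hence is conjugate in $\text{Aff}(1,p^2)$, by a translation, to the linear map $x\mapsto \nu x$ and so has the same cycle type. Third, $x\mapsto\nu x$ preserves the decomposition $\mathbb{Z}_{p^2}=\{0\}\sqcup(J\setminus\{0\})\sqcup U_{p^2}$, acting on $J\setminus\{0\}$ through $\nu\bmod p$ with all orbits of length $d$ and on $U_{p^2}$ with all orbits of length $e:=\text{ord}_{p^2}(\nu)$.

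Then I would run the case analysis. If $\nu=1$, $f_{1,u}$ is translation by $u$, of type $x_1^{p^2}$ if $u=0$, $x_p^p$ if $u\in J\setminus\{0\}$ ($p-1$ values), and a single $p^2$-cycle if $u\notin J$ ($p^2-p$ values). If $1\ne\nu\in 1+J$ (so $\nu$ has order $p$), a direct computation gives $f_{\nu,u}^{\,p}(x)=x+pu$; when $u\in J$, Lemma \ref{s3l2} yields exactly $p$ fixed points, and since the order is the prime $p$ the type is $x_1^p x_p^{p-1}$ ($p(p-1)$ pairs $(\nu,u)$), while when $u\notin J$ the map has order $p^2$ on a $p^2$-element set, hence is a single $p^2$-cycle ($(p-1)(p^2-p)$ pairs). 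If $\nu\notin 1+J$, the cycle type of $f_{\nu,u}$ equals that of $x\mapsto\nu x$, namely $x_1\,x_d^{(p-1)/d}\,x_e^{p(p-1)/e}$; running over the $p^2$ values of $u$ and over $d\mid p-1$, $d\ne 1$, the case $e=d$ contributes $p^2\sum_{t\mid p-1,\,t\ne 1}\varphi(t)\,x_1 x_t^{(p^2-1)/t}$ (using $(p-1)/d+p(p-1)/d=(p^2-1)/d$) and the case $e=dp$ contributes $p^2\sum_{t\mid p-1,\,t\ne 1}\varphi(tp)\,x_1 x_{tp}^{(p-1)/t}x_t^{(p-1)/t}$.

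Finally I would assemble the sum: the $x_{p^2}$ contributions from the two single-cycle situations total $(p^2-p)+(p-1)(p^2-p)=p^2(p-1)=p\varphi(p^2)$, and together with $x_1^{p^2}$, $(p-1)x_p^p$, $p(p-1)x_1^p x_p^{p-1}$, and the two sums above, division by $|\text{Aff}(1,p^2)|=p^2\varphi(p^2)$ gives the asserted formula; as a consistency check the element counts add up to $p^2+(p-1)p^2+p^3(p-2)=p^2\varphi(p^2)$. The only genuinely delicate point is the bookkeeping in the last case — verifying that every affine map with $\nu\notin 1+J$ inherits the cycle type of $x\mapsto\nu x$ (the conjugation remark) and, more subtly, that among the $p$ lifts in $U_{p^2}$ of an element of order $d$ in $U_p$ exactly one has order $d$ and the remaining $p-1$ have order $dp$, which is exactly where cyclicity of $U_{p^2}$ together with $\gcd(d,p)=1$ is used. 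Everything else reduces to the elementary translation and fixed-point computations already prepared by Lemmas \ref{s3l1} and \ref{s3l2}.
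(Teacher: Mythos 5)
Your proposal is correct and follows essentially the same route as the paper: the same partition of $\text{Aff}(1,p^{2})$ by the multiplier $\nu$ (relative to $1+J$) and the translation part (relative to $J$), the same use of Lemmas \ref{s3l1} and \ref{s3l2}, and the same orbit counts on $\{0\}$, $J\setminus\{0\}$ and $U_{p^{2}}$ leading to identical coefficients. If anything your handling of the fixed-point-free case $\nu\in 1+J$, $u\in U_{p^{2}}$ is tighter, since the explicit computation $f_{\nu,u}^{\,p}(x)=x+pu$ forces order $p^{2}$ and hence a single $p^{2}$-cycle (the paper infers the $p^{2}$-cycle from fixed-point-freeness alone), and your direct conjugation-by-a-translation argument replaces the paper's citation of Fripertinger's lemma.
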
 where $ \varphi $ is the Euler's phi-function.\\ \begin{proof}
	                 Consider the  following pairwise disjoint subsets of  $ \text{Aff}(1, p^{2}) $ covering it :
	                
									\begin{enumerate}	                
	                \item  $S_{0}=\{I= \text {the identity permutation}\} $
	                \item  $ S_{1}=\{f_{1,t^{\prime}} : t^{\prime} \in J\setminus\{0\} \}$ 
	                \item  $S_{2} = \{f_{\nu,t^{\prime}} : \nu\in U_{p^{2}}\setminus 1+J ~ \text{and} ~ t^{\prime}\in\mathbb{Z}_{p^{2}}\} $ 
	                \item  $ S_{3}=\{f_{\nu,t^{\prime}} : \nu(\neq1)\in1+J ~\text{and}~ t^{\prime}\in J \} $
	                \item $S_{4}=\{f_{\nu,t^{\prime}} :  \nu\in 1+J ~\text{and}~ t^{\prime}\in U_{p^{2}}\}. $
	                \end{enumerate}

	                We note that, $I= f_{1,0} $  is the product of $p^2$ disjoint 1-cycles.
	                Let  $ f_{1,t^{\prime}}\in  S_{1}  $. Since $t^{\prime}\neq 0 $ and  $t^{\prime} \equiv 0 (\text{mod}~p)$, $ f_{1,t^{\prime}}$ is the product of $ p  $ disjoint $p$-cycles.\\ 
	                
	                                  By \cite[Lemma 2]{ftr}, $f_{\nu, t^{\prime}}\in S_{2} $ has same cycle type as the cycle type of $f_{\nu,0}. $ Since $K= \{ f_{\nu,0}:\nu \in U_{p^2}\}\cong U_{p^2} $ and $U_{p^2}$ is a cyclic group (see \cite[p.155]{gal}) of order $\varphi(p^2)$, where $\varphi$ is the Euler's phi-function, for each divisor $d$ of $ \varphi(p^2) $, there are exactly $\varphi(d) $ elements of order $d$ in $K$. Let $\nu\in U_{p^2}\setminus 1+J $. Then by  Lemma \ref{s3l1}, $f_{\nu,0} $ fixes exactly one element. If the order of $f_{\nu,0} $ is $t$ and if it divides $p-1$, then $f_{\nu, 0} $ is the product of $\frac{p^2-1}{t} $ disjoint $t$-cycles 
	                and  a cycle of length $1$. Assume that the order of $f_{\nu, 0} \in S_2$ is $tp$, where $t$ divides $p-1$. Then $\nu^{t}$ has order $p$ and so $\nu^{t} \in 1+J$. Thus by Lemma \ref{s3l2}, $ f_{\nu,0}^{t} = f_{\nu^{t},0}$ fixes each element of $J$. Hence $ f_{\nu, 0} $ is the product of  $\frac{p-1}{t} $ disjoint $tp$-cycles, $\frac{p-1}{t} $ disjoint $t$-cycles and a cycle of length $1$.\\

	                Let $f_{\nu, t^{\prime}} \in S_3$. Since $ \nu t^{\prime}\equiv t^{\prime} (\text{mod} ~ p^2)$ and the order of $\nu$ is $p$, thus the order of $f_{\nu,t^{\prime}}$ is $p$. Further,  by Lemma \ref{s3l2}, $f_{\nu,t^{\prime}} $ fixes each element of a coset of $J$ in $\mathbb{Z}_{p^2}$. Therefore $f_{\nu, t^{\prime}} $ is the product of $p-1$ disjoint $p$-cycles and $p$ disjoint cycles of length $1$.\\  
	                
	                Let $\nu \in 1+ J $ and $x \in \mathbb{Z}_{p^2} $. Then the congruence $(\nu-1)x \equiv - t^{\prime} (\text{mod} ~ p^2) $ has no solution if $t^{\prime} \in U_{p^2} $. Thus  if $ f_{\nu,t^{\prime}} \in S_4 $, then it does not fix any element of  $ \mathbb{Z}_{p^2} $. Therefore $ f_{\nu,t^{\prime}} $ 
	                 is a cycle of length $p^2$. 
	                 Hence the cyclic index of  $ \text{Aff}(1, p^{2}) $ is given by \begin{equation*} \begin{split}
	                 \text{P}_{\text{Aff}(1, p^{2})}(x_{1},\cdots ,x_{p^{2}})=&\frac{1}{p^{2}\varphi(p^{2})}[x_{1}^{p^{2}}+(p-1)x_{p}^{p} + p^{2}\sum_{t|p-1, t\neq 1}\varphi(t)x_{1}x_{t}^{\frac{p^{2}-1}{t}}+\\&
	                 p^{2} \sum_{t|p-1, t\neq 1}\varphi(tp)x_{1} x_{tp}^{\frac{p-1}{t}}x_{t}^{\frac{p-1}{t}}  + p (p-1)x_{1}^{p}x_{p}^{p-1}+ p\varphi(p^{2})x_{p^{2}}], \end{split}     
	                 \end{equation*}  where $ \varphi $ is the Euler's phi-function. This completes the proof of the Proposition. \end{proof}
                   
                    \begin{example}  By Proposition \ref{s3p1},
                    \begin{enumerate}
                    	\item   $\text{P}_{\text{Aff}(1, 9)}(2,\cdots ,2) = \frac{1}{54}[2^{9}+ 2^{4}+ 3^2\times 2^5 + 3^{2}\times 2^6 + 3\times 2^6 + 3^2\times 2^2]  =2\times 11 $   
                    	 \end{enumerate}
                     \begin{equation*} 
               \begin{split} ~~~~ 2.~   \text{P}_{\text{Aff}(1, 25)}(2,\cdots ,2) =& \frac{1}{500}[2^{25} + 2^{7} + 5^2\times 2^{13} + 5^2\times 2^8 + 5^2\times 2^7 + 5^2\times 2^6  + \\& 5\times 2^{11} + 5^2 \times 2^3] = 2\times 33,781. 
              \end{split} \end{equation*}
                 	Thus by Theorem \ref{s2t2},  $|\mathcal{I}tp(D_{18}, H)| = 11$ and 
	                	 $|\mathcal{I}tp(D_{50}, H) |= 33,781$ ,
                	where in each case, $H$ is a subgroup of order $2$.
	                	\end{example}


\begin{thebibliography}{99}
	                 
	                	
	                  
	                 \bibitem[1]{brk} Bruck RH (1946) {\it Contributions to the theory of loops}. Trans. Amer. Math.Soc. 60:245-354.
	                 \bibitem[2]{bjn} Bruijin NG (1964) {\it Polya's theory of counting, in Applied Combinatorial mathematics}. chapter 5, E.F. Beckenbach (editor), John Wiley  and  Sons, Inc., New York- London- Sydney, pp.144-184.
	                 \bibitem[3]{ftr} Fripertinger H (1997) {\it cyclic indices of linear affine and projective groups}. Linear Algebra	Appl. 263:133-156.
	                 \bibitem[4]{gal} Gallian JA (1999) {\it Contemporary Abstrct Algebra}. Narosa Publishing House, 4th edn. New  Delhi. 
	                 \bibitem[5]{vk} Kakkar V, Shukla RP (2016) {\it Some characterizations of a normal Subgroup of a group and Isotopic Classes of Transversals}. Algebra colloquim 23:409-422.
	                 \bibitem[6]{rls}  Lal R (1996) {\it Transversals in groups}. J. Algebra 181:70-81.
	                 \bibitem[7]{jdh} Smith JDH, Romanowska AB (1999) {\it Post-Modern Algebra}. John Wiley and Sons, Inc.,
	                 New York.
	                  \end{thebibliography}
	           	  \end{document}